\documentclass[11pt]{article}

\usepackage{amsmath,amssymb,amsthm}
\usepackage{mathtools}
\usepackage{geometry}
\newtheorem{theorem}{Theorem}[section]
\newtheorem{lemma}[theorem]{Lemma}
\newtheorem{proposition}[theorem]{Proposition}
\newtheorem{remark}[theorem]{Remark}
\newtheorem{definition}[theorem]{Definition}

\title{A Recursive Representation of Compatibility Matrices\\
for Transported Differential Operators}

\author{Gerardo Hern\'andez-del-Valle
\thanks{Centro de Estudios Monetarios Latinoamericanos (CEMLA), Mexico City, Mexico.
E-mail: \texttt{ghernandez@cemla.org}}
}

\date{}

\begin{document}

\maketitle

\begin{abstract}

We develop a recursive algebraic framework for the compatibility matrices arising in moving-boundary problems for the heat equation. Such matrices naturally appear in the analysis of boundary trace operators, including Dirichlet-to-Neumann maps and first-passage problems for Brownian motion. The
construction is based on two recursive hierarchies: transported
differential operators in the interior and boundary operators generated by
successive differentiation of the boundary condition. Their interaction
produces a recursive family of compatibility matrices whose size grows
linearly with the order of the transported operator.

The main contribution of the paper is a decomposition of every
compatibility matrix into a universal reference matrix and a sequence of
transport perturbations. This representation isolates the boundary
geometry from the recursive transport corrections and shows that only the
transport rows change at each recursive level. Consequently, the
determinant of the compatibility matrix may be interpreted as the
determinant of a reference matrix together with successive low-rank
perturbations.

The recursive construction is illustrated through explicit examples, which
suggest additional algebraic structure in the reference determinants. The resulting recursive representation provides a systematic organization of compatibility matrices and establishes an algebraic foundation for future investigations of recursive determinant formulas, boundary trace operators, and Dirichlet-to-Neumann maps for moving-boundary problems.

\end{abstract}

\noindent\textbf{Keywords:}
Compatibility matrices; transported differential operators; heat equation;
moving boundaries; recursive operator methods; low-rank perturbations.\\
\noindent\textbf{2020 Mathematics Subject Classification.}
Primary 35R37; Secondary 35K05, 15A15.
\section{Introduction}

Moving-boundary problems for the heat equation arise in a variety of
mathematical settings, including free-boundary problems, first-passage
distributions of Brownian motion, and the construction of
Dirichlet-to-Neumann maps. In each of these settings one is naturally led
to study differential operators evaluated along a moving boundary,
together with the compatibility conditions that couple the interior
evolution to the boundary data (see, for example,
\cite{hernandez,HerGuerra,fokas}).

For low-order differential operators these compatibility conditions can be
computed explicitly. However, the algebra rapidly becomes intractable as
the order of the operator increases. Already for third-order operators,
the associated compatibility matrix involves a significant number of
coefficients together with nonlinear interactions between the transported
operator and the boundary hierarchy. Consequently, direct computations
quickly become cumbersome, suggesting that a more structural approach is
required.

The purpose of this work is to show that the transported differential
operators possess a natural recursive structure and that this recursion is
inherited by the associated compatibility matrices. More precisely, we
consider a hierarchy of differential operators
\[
L_0,L_1,\ldots,L_n,
\]
where each operator is obtained from the previous one by adding a single
higher-order layer. After transport by the heat semigroup, this hierarchy
produces transported operators
\[
N_0,N_1,\ldots,N_n,
\]
which satisfy an explicit recursive representation in terms of heat
polynomials and a sequence of time-dependent coefficient functions. This
construction identifies the transported operator of order $n$ as a
recursive perturbation of the transported operator of order $n-1$.

The principal observation of the paper is that this recursive description
of the transported operators induces a corresponding recursive
representation of the compatibility matrices. If $M_n$ denotes the
compatibility matrix associated with the transported operator $N_n$, then
$M_n$ admits a decomposition into a universal reference matrix together
with a sequence of transport perturbations. The reference matrix depends
only on the recursive boundary hierarchy, while each perturbation modifies
only the transport equations, leaving the boundary operators unchanged.

As a consequence, every recursive step introduces a perturbation whose
rank is considerably smaller than the dimension of the compatibility
matrix. From this perspective, the compatibility matrices may be viewed as
an algebraic representation of the hierarchy of boundary traces generated
by the transported operators. One immediate consequence of this
decomposition is that determinant identities may be derived recursively
through successive applications of the Matrix Determinant Lemma.

Although the explicit computations presented here are carried out only for
second- and third-order operators, the recursive construction naturally
extends to arbitrary order. Rather than treating each compatibility matrix
as an isolated algebraic object, the present approach places them within a
recursive hierarchy whose structure is inherited directly from the
transported differential operators. The resulting framework emphasizes the
underlying algebraic organization of compatibility matrices independently
of any particular moving-boundary problem and provides a natural starting
point for future investigations of recursive determinant formulas,
boundary trace operators, and Dirichlet-to-Neumann maps.

The paper is organized as follows. Section~\ref{Sec_2} introduces the
family of differential operators and defines their transported
counterparts. Section~\ref{Sec_3} establishes the recursive transport formula for
the transported operators. Section~\ref{Sec_4} develops the hierarchy of boundary
operators associated with moving boundaries. Section~\ref{Sec_5} introduces the
compatibility matrices and explains how they encode the interaction
between the transported operators and the boundary hierarchy.
Section~\ref{Sec_6} proves the recursive decomposition of the compatibility
matrices into a universal reference matrix together with successive
transport perturbations. Section~\ref{Sec_7} shows how this decomposition leads
naturally to a sequence of low-rank perturbations and recursive
determinant representations via the Matrix Determinant Lemma.
Section~\ref{Sec_8} presents explicit second- and third-order examples that
illustrate the recursive construction. Finally, Section~\ref{Sec_9} concludes with
a discussion of the recursive framework and several directions for
future research.
\section{The family of differential operators}\label{Sec_2}

Throughout this work, $D$ denotes differentiation with respect to the
spatial variable,
\[
D=\frac{d}{dx}.
\]

We begin by introducing the class of differential operators that will be
studied throughout the paper.

\begin{definition}\label{Def_Operator_L}
For each integer $n\ge0$, let $\mathcal L_n$ denote the collection of
linear differential operators of order $n$ of the form
\begin{equation}\label{operator_L}
L_n
=
\sum_{j=0}^{n}
p_j^{(n)}(x)D^j,
\end{equation}
where the coefficient multiplying $D^j$ is a polynomial of degree at most
$n-j$,
\begin{equation}\label{coeff_poly}
p_j^{(n)}(x)
=
\sum_{k=0}^{\,n-j}
c_{j,k}x^k,
\qquad
0\le j\le n.
\end{equation}
The constants
$c_{j,k}$
are assumed to be independent of $x$.
\end{definition}

The first members of this family are
\[
L_0
=
c_{0,0},
\]
\[
L_1
=
c_{1,0}D
+
\left(
c_{0,0}
+
c_{0,1}x
\right),
\]
\[
L_2
=
c_{2,0}D^2
+
\left(
c_{1,0}
+
c_{1,1}x
\right)D
+
\left(
c_{0,0}
+
c_{0,1}x
+
c_{0,2}x^2
\right),
\]
and
\[
\begin{aligned}
L_3
={}&
c_{3,0}D^3
+
\left(
c_{2,0}
+
c_{2,1}x
\right)D^2
\\
&
+
\left(
c_{1,0}
+
c_{1,1}x
+
c_{1,2}x^2
\right)D
\\
&
+
\left(
c_{0,0}
+
c_{0,1}x
+
c_{0,2}x^2
+
c_{0,3}x^3
\right).
\end{aligned}
\]

\begin{remark}
The indexing has been chosen so that the first index records the order of
the derivative, while the second index corresponds to the degree of the
associated monomial in $x$. Consequently, the coefficient multiplying
$D^j$ is always a polynomial of degree at most $n-j$. This convention
provides a uniform description of the entire family of operators and will
prove particularly convenient when studying their recursive transport.
\end{remark}

The family $\{L_n\}_{n\ge0}$ is naturally nested: passing from $L_{n-1}$ to
$L_n$ introduces one additional derivative together with one additional
coefficient in each of the lower-order polynomial coefficients. This
recursive structure is the starting point for the transport construction
developed in the following section.

\section{Recursive transport}\label{Sec_3}

The objective of this section is to transport the family of differential
operators introduced in the previous section through the heat semigroup.
The resulting transported operators preserve the polynomial structure of
the original family and admit a remarkably simple recursive description.


\begin{definition}
Let
\[
P_t
=
\exp\!\left(\frac{t}{2}D^2\right)
\]
denote the one-dimensional heat semigroup.

For every differential operator
\[
L_n\in\mathcal L_n
\]
as in Definition~\ref{Def_Operator_L},
we define its
\emph{transported operator} by

\[
N_n
=
P_tL_nP_{-t}.
\]

The notation
\[
\gamma_f(N_n)
\]
denotes the restriction of the coefficients of $N_n$ to the moving
boundary
\(
x=f(t).
\)
\end{definition}


The heat semigroup preserves polynomial differential operators. More
precisely, the transported operators remain of order $n$, although their
coefficients now depend on both the spatial variable $x$ and the time
variable $t$.

The following theorem is the fundamental recursive identity underlying the
entire construction.


\begin{theorem}[Recursive transport formula]
For every integer $n\ge1$, the transported operators satisfy
\begin{equation}\label{operator_N}
N_n
=
N_{n-1}
+
\sum_{j=0}^{n}
\frac{H_j(x,t)}{j!}
g_n^{(j)}(t)
D^{\,n-j},
\end{equation}
where $H_j(x,t)$ denotes the $j$-th classical heat polynomial introduced by
Rosenbloom and Widder \cite{Widder}, and
\[
g_n(t)
=
\sum_{k=0}^{n}
c_{0,k}t^k
\]
is obtained by replacing the spatial variable in the zeroth-order
polynomial coefficient of $L_n$ by the time variable.
\end{theorem}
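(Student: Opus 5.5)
The plan is to reduce everything to the conjugation rules for the two generators $x$ and $D$, and then to normal-order. Since $P_t=\exp(\tfrac t2D^2)$ commutes with $D$, we have $P_tDP_{-t}=D$. For the position operator, the commutator $[D^2,x]=2D$ and the fact that $[D,[D^2,x]]=0$ terminate the Hadamard series, giving
\[
P_t\,x\,P_{-t}=x+tD.
\]
Because conjugation is an algebra automorphism, this yields $P_t\,x^kD^j\,P_{-t}=(x+tD)^kD^j$ for every monomial of the form appearing in Definition~\ref{Def_Operator_L}. Hence $N_n$ is obtained from $L_n$ by the substitution $x\mapsto x+tD$. By linearity, $N_n-N_{n-1}=P_t(L_n-L_{n-1})P_{-t}$, where $L_{n-1}$ is taken with the same constants $c_{j,k}$. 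The difference $L_n-L_{n-1}$ is exactly the new layer $\sum_{j=0}^n c_{j,n-j}x^{n-j}D^j$, so it suffices to transport this single layer.

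The second step is to normal-order $(x+tD)^m$, putting powers of $x$ to the left of powers of $D$. Here I would use the generating function. Set $A=zx$ and $B=ztD$. Then $[A,B]=-z^2t$ is central, so the Baker--Campbell--Hausdorff formula gives
\[
e^{z(x+tD)}=e^{zx}\,e^{\frac{z^2t}{2}}\,e^{ztD}.
\]
Recall the Rosenbloom--Widder heat polynomials, defined by $e^{xz+tz^2/2}=\sum_m H_m(x,t)z^m/m!$. Comparing coefficients of $z^m$ on both sides gives
\[
(x+tD)^m=\sum_{i=0}^m\binom{m}{i}H_{m-i}(x,t)\,t^iD^i .
\]
Substituting this into the new layer expresses $N_n-N_{n-1}$ as a sum of the products $H_{n-j-i}(x,t)\,t^i\,c_{j,n-j}\,D^{i+j}$. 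Reindexing by the total derivative order $r=i+j$ collects all of these terms into heat-polynomial coefficients multiplying $D^r$.

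The final step is to match the resulting expression with the right-hand side of \eqref{operator_N}. The terms with $j=0$, namely $c_{0,n}(x+tD)^n$, already produce the pattern $H_{n-i}\,t^iD^i$. Writing $i=n-j'$ and noting that $t^{n-j'}\binom{n}{j'}$ is, up to the factor $1/j'!$, the $j'$-th time derivative of $t^n$, this part of the sum is recognized as $\sum_{j'}\frac{H_{j'}}{j'!}\,\frac{d^{j'}}{dt^{j'}}\bigl(c_{0,n}t^n\bigr)\,D^{n-j'}$.

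I expect this identification to be the main obstacle. Two further points must be accounted for before the sum takes the stated form with the full polynomial $g_n(t)=\sum_k c_{0,k}t^k$. First, the lower coefficients $c_{0,k}$ with $k<n$ enter $g_n^{(j)}$. Second, the contributions of the higher-derivative new coefficients $c_{j,n-j}$ with $j\ge1$ must be absorbed. I would handle this by carefully fixing the convention for which constants are regarded as new at level $n$. If needed, I would then verify the identity explicitly for $n=1,2$ to pin down the intended normalization before arguing by induction on $n$, using the normal-ordering formula above as the inductive engine. If the direct matching fails, I would restate the recursion with the correct coefficient functions $\sum_j c_{j,n-j}\binom{n-j}{i}t^i$ in place of $g_n^{(j)}/j!$, since that is what the computation literally produces.
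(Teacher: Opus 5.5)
\textbf{Verdict: gap.} Your computation is correct through the normal-ordering formula and the reduction $N_n-N_{n-1}=\sum_{j=0}^n c_{j,n-j}(x+tD)^{n-j}D^j$. But you leave the decisive matching step open, and no choice of convention or induction can close it. With $g_n(t)=\sum_{k=0}^n c_{0,k}t^k$ as written, the statement is false. Already for $n=1$,
\[
N_1-N_0=P_t(c_{1,0}D+c_{0,1}x)P_{-t}=(c_{1,0}+c_{0,1}t)D+c_{0,1}x .
\]
The right-hand side with $g_1=c_{0,0}+c_{0,1}t$ instead gives $(c_{0,0}+c_{0,1}t)D+c_{0,1}x$, so the two differ by $(c_{1,0}-c_{0,0})D$. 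For $n=2$, the $D^2$-coefficient of $N_2-N_1$ is $c_{2,0}+c_{1,1}t+c_{0,2}t^2$, not $c_{0,0}+c_{0,1}t+c_{0,2}t^2$.

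The missing idea is that your $j=0$ identification works verbatim for every $j$, because every monomial $x^{n-j}D^j$ of the new layer has total degree $n$. In
\[
c_{j,n-j}(x+tD)^{n-j}D^j=\sum_i\binom{n-j}{i}c_{j,n-j}H_{n-j-i}\,t^iD^{i+j},
\]
set $\ell=n-j-i$. Then $D^{i+j}=D^{n-\ell}$ depends only on $\ell$, and the coefficient is $c_{j,n-j}\binom{n-j}{\ell}t^{n-j-\ell}=\frac{1}{\ell!}\frac{d^\ell}{dt^\ell}\bigl(c_{j,n-j}t^{n-j}\bigr)$. Summing over $j$ gives exactly
\[
N_n-N_{n-1}=\sum_{\ell=0}^n\frac{H_\ell(x,t)}{\ell!}\,g_n^{(\ell)}(t)\,D^{n-\ell},\qquad g_n(t)=\sum_{j=0}^n c_{j,n-j}\,t^{n-j}.
\]
Here $g_n$ is the new layer with $x\mapsto t$ and $D\mapsto 1$. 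This is the $g_n$ the paper actually uses in its examples ($g_1=c_{1,0}+c_{0,1}t$, $g_2=c_{2,0}+c_{1,1}t+c_{0,2}t^2$). So keep the form $g_n^{(\ell)}/\ell!$ and correct only $g_n$; your fallback coefficient functions are this same object, unrecognized. No induction is needed, since each level is a direct computation.

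The paper's own proof does not repair this either. It claims $L_n-L_{n-1}$ consists only of the zeroth-order coefficient. It then treats $P_tg_n(x)P_{-t}$ as the function $\sum_jH_jg_n^{(j)}/j!$ and appends $D^{n-j}$ afterwards. In fact $P_t\,x\,P_{-t}=x+tD$ is a differential operator, and the new layer contains $c_{j,n-j}x^{n-j}D^j$ for every $j$. Your normal-ordering argument, with the corrected $g_n$, is the one that actually proves the formula.
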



\begin{proof}

The proof proceeds by induction on $n$.

For $n=1$ the identity follows directly from the transport formulas for
polynomials under the heat semigroup.

Assume that the formula holds for $N_{n-1}$.

The operator $L_n$ differs from $L_{n-1}$ only through the additional
polynomial coefficient associated with the identity operator. Conjugating
this polynomial by the heat semigroup produces its classical heat-polynomial
expansion,

\[
P_tg_n(x)P_{-t}
=
\sum_{j=0}^{n}
\frac{H_j(x,t)}{j!}
g_n^{(j)}(t).
\]

Since multiplication by $D^{\,n-j}$ commutes with the transport, the new
terms are appended to the transported operator of the previous level,
yielding

\[
N_n
=
N_{n-1}
+
\sum_{j=0}^{n}
\frac{H_j}{j!}
g_n^{(j)}
D^{\,n-j},
\]
which completes the induction.
\end{proof}


Using the recursive transport formula \eqref{operator_N} together with the
representation \eqref{operator_L} of the operators in $\mathcal L_n$, the
first transported operators are
\[
N_1
=
g_1(t)D
+
(c_{0,1}x+c_{0,0}),
\]
\[
\begin{aligned}
N_2
={}&
g_2(t)D^2
\\
&
+
\left(
g_1(t)
+
(c_{1,1}+2c_{0,2}t)x
\right)D
\\
&
+
\left(
c_{0,0}+c_{0,1}x+c_{0,2}(x^2+t)
\right),
\end{aligned}
\]
while the expression for $N_3$ already illustrates the recursive nature
of the construction,
\[
N_3
=
N_2
+
\sum_{j=0}^{3}
\frac{H_j}{j!}
g_3^{(j)}
D^{\,3-j}.
\]


Differentiating the recursive identity with respect to the spatial variable
produces another recursion that will play a central role in the construction
of the compatibility matrices.

\begin{proposition}

The first spatial derivative satisfies
\[
DN_n
=
DN_{n-1}
+
\sum_{k=0}^{n-1}
\frac{H_k}{k!}
\left(
g_n^{(k+1)}
D^{\,n-k-1}
+
g_n^{(k)}
D^{\,n-k+1}
\right)
+
\frac{H_n}{n!}
g_n^{(n)}
D.
\]

More generally,
\[
D^mN_n
=
D^mN_{n-1}
+
D^m
\left(
\sum_{j=0}^{n}
\frac{H_j}{j!}
g_n^{(j)}
D^{\,n-j}
\right).
\]

\end{proposition}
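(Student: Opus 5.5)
We will prove the general identity first and then obtain the formula for $DN_n$ as the case $m=1$, made explicit. Throughout, $D^mN_n$ is read as a composition of operators: $D^m$ acts on the product of the coefficient and $D^{n-j}$ applied to a test function. The recursive transport formula \eqref{operator_N} expresses $N_n$ as $N_{n-1}$ plus the finite sum $\sum_{j=0}^{n}\frac{H_j}{j!}g_n^{(j)}D^{n-j}$. Left composition with $D^m$ is linear, so it distributes over this sum. That immediately gives
\[
D^mN_n=D^mN_{n-1}+D^m\Bigl(\sum_{j=0}^{n}\frac{H_j}{j!}g_n^{(j)}D^{\,n-j}\Bigr).
\]
No further work is needed for the general statement.

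For $m=1$ we expand each summand by the Leibniz rule for the composition of $D$ with a multiplication operator. Since $g_n^{(j)}$ depends only on $t$, it commutes with $D$, and we get
\[
D\circ\frac{H_j}{j!}g_n^{(j)}D^{\,n-j}
=\frac{\partial_xH_j}{j!}\,g_n^{(j)}D^{\,n-j}+\frac{H_j}{j!}\,g_n^{(j)}D^{\,n-j+1}.
\]
The key input is the Appell property of the Rosenbloom--Widder heat polynomials, $\partial_xH_j=jH_{j-1}$ (with $H_{-1}=0$). We will verify this directly from
\[
H_j(x,t)=j!\sum_{k\le j/2}\frac{x^{j-2k}t^k}{k!(j-2k)!},
\]
or equivalently from the generating function $e^{xz+tz^2/2}$. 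It yields $\partial_x\bigl(H_j/j!\bigr)=H_{j-1}/(j-1)!$ for $j\ge1$, while the derivative vanishes for $j=0$.

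Summing over $j$ then produces two groups of terms. The first group is $\sum_{j=1}^{n}\frac{H_{j-1}}{(j-1)!}g_n^{(j)}D^{n-j}$. After reindexing $k=j-1$ it becomes $\sum_{k=0}^{n-1}\frac{H_k}{k!}g_n^{(k+1)}D^{n-k-1}$. The second group is $\sum_{k=0}^{n}\frac{H_k}{k!}g_n^{(k)}D^{n-k+1}$. We split off its $k=n$ term, $\frac{H_n}{n!}g_n^{(n)}D$, and combine the remaining terms $k=0,\dots,n-1$ with the reindexed first group. This gives exactly the displayed formula for $DN_n$.

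The main obstacle is bookkeeping rather than substance. First, the normalization of $H_j$ must be the one for which $\partial_xH_j=jH_{j-1}$. Second, the product-rule term must be attributed to the operator composition $D\circ(\cdot)$ and not to a coefficient-wise derivative. If the paper's intended meaning were instead differentiation of coefficients only, the $D^{n-k+1}$ terms would not appear. Their presence in the statement confirms the composition reading, so we will state that interpretation explicitly at the outset.
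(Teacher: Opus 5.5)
Your proposal is correct and follows essentially the paper's proof: for $m=1$ you use the product rule, the Appell identity $DH_j=jH_{j-1}$, the reindexing $k=j-1$, and split off the $j=n$ term. For general $m$ you rightly note that linearity of left composition suffices, where the paper invokes Leibniz's rule unnecessarily. One harmless slip: your explicit formula with $t^k$ is generated by $e^{xz+tz^2}$, not $e^{xz+tz^2/2}$, and the paper's operator $\exp(\frac{t}{2}D^2)$ calls for $(t/2)^k$; the Appell property holds in either normalization, so nothing breaks.
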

\begin{proof}

Differentiating the recursive transport formula gives

\[
DN_n
=
DN_{n-1}
+
\sum_{j=0}^{n}
D
\left(
\frac{H_j}{j!}
g_n^{(j)}
D^{\,n-j}
\right).
\]
Since the coefficients $g_n^{(j)}(t)$ depend only on $t$, the derivative
acts only on the heat polynomial and the differential operator. Using the
Appell identity
\[
DH_j=jH_{j-1},
\]
one obtains
\[
D
\left(
\frac{H_j}{j!}
g_n^{(j)}
D^{\,n-j}
\right)
=
\frac{H_{j-1}}{(j-1)!}
g_n^{(j)}
D^{\,n-j}
+
\frac{H_j}{j!}
g_n^{(j)}
D^{\,n-j+1}.
\]

Reindexing the first sum with $k=j-1$ yields
\[
\sum_{k=0}^{n-1}
\frac{H_k}{k!}
g_n^{(k+1)}
D^{\,n-k-1},
\]
while the second sum remains unchanged. Separating the last term
$j=n$ produces
\[
\frac{H_n}{n!}
g_n^{(n)}
D,
\]
which proves the first identity.

The higher-order formula follows immediately by repeated differentiation
and successive applications of Leibniz's rule.

\end{proof}
Repeated application of Leibniz's rule yields explicit expressions for all
higher spatial derivatives. These identities provide the transport rows of
the compatibility matrices constructed in the following section.

\section{The boundary hierarchy}\label{Sec_4}

The transported operators introduced in the previous section describe the
evolution of the solution in the interior of the domain. In many
moving-boundary problems, however, the quantities of interest are
determined by evaluating the solution and its derivatives along the
boundary itself. This situation naturally arises, for example, in the
study of first-passage problems for Brownian motion, where the moving
boundary determines the hitting-time distribution
\cite{hernandez,HerGuerra}.

To obtain a closed compatibility system, the transported equations must be
supplemented by a hierarchy of boundary identities.

Throughout this work we consider solutions of the one-dimensional heat
equation
\[
v_t=\frac12D^2v,
\]
defined on the moving domain
\[
x<f(t),
\]
and satisfying the homogeneous Dirichlet condition

\[
v(t,f(t))=0.
\]

Differentiating this identity with respect to time and using the heat
equation yields the first boundary differential operator.


\begin{definition}

Define the first boundary operator by

\[
P_1(D,t)
=
D^2
+
2f'(t)D.
\]
Then every sufficiently smooth solution satisfies
\[
P_1(D,t)v=0,
\]
along the moving boundary \(x=f(t)\).
\end{definition}


Successive differentiation generates an infinite hierarchy of purely
spatial differential operators.

\begin{theorem}[Boundary recursion]

For every integer \(n\ge1\),
\[
P_{n+1}(D,t)
=
\left(
\frac12D^2
+
f'(t)D
\right)
P_n(D,t)
+
\frac{\partial}{\partial t}P_n(D,t),
\]
where the time derivative acts only on the coefficients of \(P_n\).

\end{theorem}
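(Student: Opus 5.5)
The plan is to identify $P_n(D,t)$ with the spatial operator produced by the $n$-th time derivative of the boundary identity, and then obtain the recursion from one application of the chain rule followed by the heat equation. Concretely, I take as the working definition that $P_n(D,t)$ is the differential operator in $D$, with coefficients depending on $t$ only, such that for every sufficiently smooth solution of $v_t=\tfrac12D^2v$,
\[
\frac{d^n}{dt^n}\bigl[v(t,f(t))\bigr]=\kappa\,\bigl(P_n(D,t)v\bigr)(t,f(t)),
\]
for a fixed normalising constant $\kappa$. The definition of $P_1$ in the paper corresponds to $\kappa=\tfrac12$: indeed $\frac{d}{dt}v(t,f(t))=\tfrac12\bigl(D^2+2f'D\bigr)v$. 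Since the claimed recursion is linear and homogeneous in $P_n$, a common constant factor is harmless, and I will carry it along silently. I will also set $P_0=\mathrm{Id}$, which is consistent with $P_1$ being a scalar multiple of $\tfrac12D^2+f'D$.

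The inductive step runs as follows. Assume the characterisation holds for $P_n=\sum_k a_k(t)D^k$, and consider $w(t,x)=(P_nv)(t,x)$. By the chain rule,
\[
\frac{d}{dt}\,w(t,f(t))=\bigl(\partial_tw+f'(t)Dw\bigr)(t,f(t)).
\]
Expanding $\partial_t w$ gives $\sum_k a_k'(t)D^kv+\sum_k a_k(t)D^k v_t$. The first sum is exactly $\bigl(\tfrac{\partial}{\partial t}P_n\bigr)v$, with the time derivative acting only on the coefficients. In the second sum I substitute the heat equation $v_t=\tfrac12D^2v$, which is legitimate because $D^k$ and $\partial_t$ commute on smooth $v$. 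This gives $P_n\bigl(\tfrac12D^2v\bigr)$. Combining the pieces,
\[
\frac{d^{n+1}}{dt^{n+1}}\bigl[v(t,f(t))\bigr]=\Bigl(\bigl(\tfrac{\partial}{\partial t}P_n\bigr)+P_n\tfrac12D^2+f'D\,P_n\Bigr)v\Big|_{x=f(t)}.
\]

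The one step needing care is rewriting $P_n\tfrac12D^2$ as $\tfrac12D^2P_n$, so that the result takes the stated form $\bigl(\tfrac12D^2+f'D\bigr)P_n+\partial_tP_n$. This holds because the coefficients $a_k$ depend on $t$ only, so $D$ commutes with $P_n$. That fact must itself be propagated by the induction. The new coefficients produced by the recursion are built from $a_k$, $a_k'$ and $f'$, all of which are functions of $t$ alone, so $P_{n+1}$ again has $x$-independent coefficients. This closes the induction, and the defining property then yields that $P_{n+1}v$ vanishes on $x=f(t)$ whenever $v(t,f(t))\equiv0$.

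I expect the main obstacle to be the well-definedness of $P_n$ rather than the computation. The boundary identity only determines the restriction of $P_nv$ to the curve, so the operator needs to be pinned down as the canonical one produced by the above manipulation, not inferred from its values on $x=f(t)$. I will therefore phrase the theorem as the statement that the operator generated by differentiating along the boundary and eliminating $v_t$ through the heat equation satisfies the recursion. With that phrasing the identity is a formal consequence of the chain rule and the commutation of $D$ with $t$-dependent coefficients.
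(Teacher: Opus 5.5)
Your proposal is correct and follows the paper's argument: differentiate along $x=f(t)$ with $\frac{d}{dt}=\partial_t+f'(t)D$, then substitute $v_t=\tfrac12D^2v$. It is slightly more careful than the paper in two respects: you work with the exact identity $\frac{d^n}{dt^n}v(t,f(t))=\kappa\,(P_nv)(t,f(t))$ for all heat solutions rather than only with $P_nv=0$, which makes the paper's final ``read off $P_{n+1}$'' step a genuine identification of operators, and you justify $P_n\tfrac12D^2=\tfrac12D^2P_n$ via the $t$-only coefficients, which the paper uses tacitly. Two harmless nits: the well-definedness worry is moot, because the boundary jets $D^kv(t,f(t))$ of heat solutions can be prescribed arbitrarily (e.g.\ by translated heat polynomials), so your characterisation already determines the coefficients of $P_n$; and with $\kappa=\tfrac12$ held fixed the consistent choice is $P_0=2\,\mathrm{Id}$ rather than $\mathrm{Id}$, though $P_0$ is never used.
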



\begin{proof}

Assume that

\[
P_n(D,t)v=0
\]
holds along the moving boundary.

Differentiating with respect to time gives
\[
\frac{d}{dt}
\bigl(
P_n(D,t)v
\bigr)
=
0.
\]
Since the boundary itself moves,
\[
\frac{d}{dt}
=
\frac{\partial}{\partial t}
+
f'(t)D.
\]
Furthermore,
\[
v_t
=
\frac12D^2v.
\]
Therefore,
\[
0
=
\left(
\frac12D^2
+
f'D
\right)
P_n(D,t)v
+
\frac{\partial}{\partial t}
P_n(D,t)v,
\]
which immediately yields
\[
P_{n+1}
=
\left(
\frac12D^2
+
f'D
\right)
P_n
+
\frac{\partial}{\partial t}P_n.
\]
\end{proof}


The first members of the hierarchy are

\[
P_1
=
D^2
+
2f'D,
\]

\[
P_2
=
D^4
+
4f'D^3
+
4(f')^2D^2
+
4f''D,
\]
and
\[
\begin{aligned}
P_3
={}&
\frac12D^6
+
3f'D^5
+
6(f')^2D^4
\\
&
+
\left(
6f''
+
4(f')^3
\right)D^3
+
12f'f''D^2
+
4f'''D.
\end{aligned}
\]


\begin{remark}

The boundary hierarchy depends exclusively on the moving boundary
\(f(t)\) and is completely independent of the transported operators
introduced in the previous section. This separation between transport and
boundary information will play a fundamental role in the recursive
construction of the compatibility matrices. Indeed, the boundary rows will
remain unchanged throughout the recursive decomposition, whereas the
transport rows will evolve with the transported operators.

\end{remark}

\section{Compatibility matrices}\label{Sec_5}

The transported operators introduced in Section~3 govern the evolution of
the solution in the interior of the moving domain, whereas the boundary
operators introduced in Section~4 describe the constraints imposed by the
moving boundary.

A compatibility condition is obtained by requiring that these two
descriptions be simultaneously satisfied along the boundary
\[
x=f(t).
\]

Since the transported operators and the boundary operators involve
successively higher spatial derivatives of the unknown function, it is
natural to collect these equations into a single linear system. The
coefficient matrix of this system will be called the
\emph{compatibility matrix}.

\begin{definition}

Let $N_n$ be the transported operator of order $n$, and let
$P_1,\ldots,P_n$ denote the corresponding boundary hierarchy.

The compatibility matrix associated with $N_n$ is obtained by alternating
transport equations and boundary equations according to

\[
M_n
=
\begin{pmatrix}
\gamma_f\!\left(D^{\,n-1}N_n\right)
\\
P_n
\\
\gamma_f\!\left(D^{\,n-2}N_n\right)
\\
P_{n-1}
\\
\vdots
\\
\gamma_f(N_n)
\\
P_1
\end{pmatrix}.
\]
Here
\[
\gamma_f
\]
denotes restriction of the coefficients to the moving boundary
\(x=f(t)\).

\end{definition}

\begin{remark}

The transported operator $N_n$ has order $n$.

Consequently,
\[
D^{\,k}N_n,
\qquad
0\le k\le n-1,
\]
contains derivatives of the unknown function up to order
\[
2n-k.
\]
Similarly, the boundary operator $P_j$ has order $2j$.

The alternating construction above therefore produces a square linear
system whose unknowns are precisely the boundary derivatives of the
solution.

In particular,
\[
M_n
\]
has dimension
\[
2n\times2n.
\]
\end{remark}

The first compatibility matrix is
\[
M_1
=
\begin{pmatrix}
\gamma_f(N_2)
\\
P_1
\end{pmatrix},
\]
which is a \(2\times2\) matrix.

Explicitly,

\[
M_1
=
\begin{pmatrix}
g_2(t)
&
g_1(t)+(g_2)'(t)f(t)
\\
1
&
2f'
\end{pmatrix}.
\]

The second compatibility matrix is

\[
M_2
=
\begin{pmatrix}
\gamma_f(DN_3)
\\
P_2
\\
\gamma_f(N_3)
\\
P_1
\end{pmatrix},
\]
which is a \(4\times4\) matrix.

\begin{remark}

The compatibility matrices naturally decompose into two distinct families
of rows.

The transport rows
\[
\gamma_f\!\left(D^kN_n\right),
\]
originate from the transported differential operators and therefore depend
on the coefficients of the transported family.

The boundary rows
\[
P_j,
\]
depend exclusively on the geometry of the moving boundary.

This separation between transport information and boundary information is
the fundamental structural property of the compatibility matrices. It will
allow the recursive decomposition developed in the next section.

\end{remark}

The alternating structure of the compatibility matrices immediately
suggests a recursive construction. Indeed, when passing from $M_{n-1}$ to
$M_n$, the boundary hierarchy remains unchanged, while only the transport
rows are modified by the newly introduced transported coefficients. This
observation forms the basis of the recursive decomposition developed in the
next section.

\section{Recursive decomposition}\label{Sec_6}

The compatibility matrices constructed in the previous section naturally
split into transport and boundary contributions. The objective of this
section is to exploit the recursive structure of the transported operators
to obtain a corresponding recursive decomposition of the compatibility
matrices.


\subsection{Transport and boundary matrices}

Recall that the compatibility matrix associated with the transported
operator $N_{n+1}$ is

\[
M_n=
\begin{pmatrix}
\gamma_f(D^nN_{n+1})\\
P_{n+1}\\
\gamma_f(D^{n-1}N_{n+1})\\
P_n\\
\vdots\\
\gamma_f(N_{n+1})\\
P_1
\end{pmatrix}.
\]

It is convenient to separate transport information from boundary
information.

\begin{definition}

Define

\[
T_n=
\begin{pmatrix}
\gamma_f(D^nN_{n+1})\\
0\\
\gamma_f(D^{n-1}N_{n+1})\\
0\\
\vdots\\
\gamma_f(N_{n+1})\\
0
\end{pmatrix},
\]
and

\[
C_n=
\begin{pmatrix}
0\\
P_{n+1}\\
0\\
P_n\\
\vdots\\
0\\
P_1
\end{pmatrix}.
\]

Then
\[
M_n=T_n+C_n.
\]

\end{definition}

Notice that the boundary hierarchy depends only on the moving boundary
$f(t)$, whereas all recursive changes occur inside the transport matrix.


\subsection{Recursive transport rows}

The recursive transport identity immediately induces a recursive
decomposition of every transport row.

\begin{definition}

For every $n\ge2$, define the transport increment

\[
R_n
=
N_n-DN_{n-1}.
\]
\end{definition}

Since

\[
N_n
=
DN_{n-1}
+
R_n,
\]
one immediately obtains the following result.

\begin{lemma}
For every integer $m\ge0$,
\[
D^mN_n
=
D^{m+1}N_{n-1}
+
D^mR_n.
\]

\end{lemma}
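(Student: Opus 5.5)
The identity follows directly from the definition of the transport increment $R_n=N_n-DN_{n-1}$. Rearranged, this definition reads
\[
N_n = DN_{n-1}+R_n,
\]
an equality of linear differential operators acting on sufficiently smooth functions of $x$, with $t$ held fixed as a parameter. The plan is to apply the operator $D^m$ on the left of both sides and use two elementary properties of operator composition.

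The first property is associativity:
\[
D^m(DN_{n-1})=(D^mD)N_{n-1}=D^{m+1}N_{n-1}.
\]
The second is left-distributivity of composition over addition:
\[
D^m(A+B)=D^mA+D^mB,
\]
valid for any linear differential operators $A,B$. This holds because $D^m$ is linear: $D^m\bigl((A+B)u\bigr)=D^m(Au)+D^m(Bu)$ for every smooth $u$.

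Combining the two properties gives
\[
D^mN_n=D^{m+1}N_{n-1}+D^mR_n
\]
for all $m\ge0$. The case $m=0$ is just the definition of $R_n$. For a formal version one can induct on $m$: multiply the identity at level $m$ on the left by $D$ and apply the same two facts.

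There is no genuine obstacle. The only point to watch is that $D^m$ must act as left composition of operators, exactly as in the general formula for $D^mN_n$ in the Proposition of Section~\ref{Sec_3}. It must not be read as differentiating the coefficients only. With the composition reading, no Leibniz expansion is needed, because $R_n$ is never written out explicitly.

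The identity should also be established before applying the restriction $\gamma_f$. Restriction to $x=f(t)$ does not commute with $D$, so $\gamma_f(D^mN_n)$ has to be computed from the composed operator. The lemma, however, is stated at the level of operators, so this issue does not arise in proving it.
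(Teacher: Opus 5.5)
Your proof is correct and is essentially the paper's own argument: apply $D^m$ to the defining identity $N_n=DN_{n-1}+R_n$ and use linearity together with $D^m D=D^{m+1}$. Your warning about the composition reading is harmless but not actually needed, since the identity holds under any reading of $D$ as a fixed linear map applied consistently throughout, including in the definition of $R_n$.
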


\begin{proof}

Simply differentiate

\[
N_n=DN_{n-1}+R_n
\]
$m$ times with respect to the spatial variable.
\end{proof}


\subsection{Reference transport matrices}

Repeated application of the previous lemma allows every transport row to be
reduced recursively until only derivatives of the first transported
operator remain.

This motivates the following definition.

\begin{definition}

Define the reference transport matrix

\[
T_n^{(0)}
=
\begin{pmatrix}
\gamma_f(D^nN_1)\\
0\\
\gamma_f(D^{n-1}N_1)\\
0\\
\vdots\\
\gamma_f(DN_1)\\
0
\end{pmatrix}.
\]

The corresponding reference compatibility matrix is
\[
M_n^{(0)}
=
T_n^{(0)}
+
C_n.
\]

\end{definition}


\subsection{Perturbation matrices}

The remaining terms generated by the recursive decomposition are collected
according to their transport level.

\begin{definition}

For every $k\ge1$, define

\[
\Delta_k
=
\begin{pmatrix}
\gamma_f(D^{k-1}R_{k+1})\\
0\\
\gamma_f(D^{k-2}R_{k+1})\\
0\\
\vdots\\
\gamma_f(R_{k+1})\\
0
\end{pmatrix},
\]

where

\[
R_{k+1}=N_{k+1}-DN_k.
\]

Thus every perturbation matrix contains only transport information
introduced at the $(k+1)$-st transport level.

\end{definition}

For example,

\[
\Delta_1
=
\begin{pmatrix}
\gamma_f(R_2)\\
0
\end{pmatrix}
=
\begin{pmatrix}
\gamma_f(N_2-DN_1)\\
0
\end{pmatrix},
\]

while

\[
\Delta_2
=
\begin{pmatrix}
\gamma_f(DR_3)\\
0\\
\gamma_f(R_3)\\
0
\end{pmatrix}
=
\begin{pmatrix}
\gamma_f(DN_3-D^2N_2)\\
0\\
\gamma_f(N_3-DN_2)\\
0
\end{pmatrix}.
\]


\subsection{Recursive decomposition theorem}

The previous construction immediately yields the main structural result.

\begin{theorem}

For every integer $n\ge1$,
\[
T_n
=
T_n^{(0)}
+
\sum_{k=1}^{n}\Delta_k.
\]

Consequently,

\[
M_n
=
M_n^{(0)}
+
\sum_{k=1}^{n}\Delta_k.
\]

\end{theorem}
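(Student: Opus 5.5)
The plan is a row-by-row telescoping argument. Since $C_n$ appears unchanged on both sides, the second identity $M_n=M_n^{(0)}+\sum_{k=1}^n\Delta_k$ follows from the first by adding $C_n$. So everything reduces to $T_n=T_n^{(0)}+\sum_{k=1}^n\Delta_k$. Because all the matrices involved vanish on the boundary rows (the even-numbered rows), it suffices to compare the transport rows one at a time.

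For a fixed transport row, of the form $\gamma_f(D^mN_{n+1})$ with $0\le m\le n$, I will apply the lemma $D^mN_j=D^{m+1}N_{j-1}+D^mR_j$ repeatedly, lowering the transport level from $n+1$ down to $1$. The induction on the number of steps gives
\[
D^mN_{n+1}=D^{m+n}N_1+\sum_{k=1}^{n}D^{\,m+n-k}R_{k+1}.
\]
Each step is exactly the previous lemma. The restriction $\gamma_f$ acts coefficientwise and is linear, so it passes through the sum. Thus each transport row of $T_n$ splits into a reference part, built from derivatives of $N_1$ only, plus exactly one contribution from each transport level $k+1$ with $1\le k\le n$. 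The boundary rows contribute nothing.

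The next step is to regroup these contributions by level instead of by row. Collecting, over all transport rows, the terms that involve $R_{k+1}$ produces a matrix whose only nonzero rows are transport rows containing derivatives of $R_{k+1}$. This is precisely the role of $\Delta_k$, embedded in the $2(n+1)\times 2n$-type ambient shape of $M_n$ by zero-padding. Collecting the terms involving only $N_1$ gives $T_n^{(0)}$. Summing over $k$ then yields the identity. Since all operations are linear and rows are treated independently, no cancellation issues arise.

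The main obstacle I expect is bookkeeping rather than analysis. The definitions of $T_n^{(0)}$ and $\Delta_k$ must be reconciled with the exponents produced by the telescoping. The telescoped row carries $D^{m+n}N_1$ and $D^{m+n-k}R_{k+1}$, whereas $T_n^{(0)}$ and $\Delta_k$ are displayed with lower powers of $D$ and with fewer rows ($k$ transport rows in $\Delta_k$). I would first fix a precise convention: how $\Delta_k$ is padded and aligned inside the larger matrix, and which derivative orders of $N_1$ and $R_{k+1}$ occupy each row. I would choose this convention so that row $m$ of $T_n^{(0)}$, respectively $\Delta_k$, is by definition $\gamma_f(D^{m+n}N_1)$, respectively $\gamma_f(D^{m+n-k}R_{k+1})$. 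I would then check on $n=1$ (and on $n=2$ against the displayed $\Delta_1,\Delta_2$) that this matches the paper's intended indexing. Once the convention is fixed, the identity is an immediate consequence of the displayed telescoped formula.
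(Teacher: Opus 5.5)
Your telescoped formula $D^mN_{n+1}=D^{m+n}N_1+\sum_{k=1}^{n}D^{m+n-k}R_{k+1}$ is correct. It is the explicit form of what the paper's induction does (apply Lemma~6.1 to every transport row and collect the corrections). The gap is the last step, which you call bookkeeping but which is not. The paper's $\Delta_k$ is a fixed, $n$-independent matrix whose $k$ transport rows are $\gamma_f(D^{j}R_{k+1})$, $j=k-1,\dots,0$. Likewise $T_n^{(0)}$ has rows $\gamma_f(D^{j}N_1)$, $j=n,\dots,1$. Your blocks instead have rows $\gamma_f(D^{m+n-k}R_{k+1})$ and $\gamma_f(D^{m+n}N_1)$, whose derivative orders depend on $n$. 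For $k<n$ every exponent is at least $n-k\ge1$, so the row $\gamma_f(R_{k+1})$ of $\Delta_k$ never occurs. No zero-padding or alignment reconciles the two. Declaring ``by definition'' that row $m$ of $\Delta_k$ is $\gamma_f(D^{m+n-k}R_{k+1})$ proves a different theorem, not this one.

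The $n=2$ check you defer actually fails. Use the most favourable indexing, that of Example~2, where $T_2$ has transport rows $\gamma_f(DN_3)$ and $\gamma_f(N_3)$.
\begin{itemize}
\item Your top row is $\gamma_f(D^3N_1)+\gamma_f(D^2R_2)+\gamma_f(DR_3)=\gamma_f(D^2N_2)+\gamma_f(DR_3)$.
\item The top row of $T_2^{(0)}+\Delta_1+\Delta_2$ is $\gamma_f(D^2N_1)+\gamma_f(DR_3)$, plus $\gamma_f(R_2)$ if $\Delta_1$ is aligned at the top.
\item The difference has $D^4$-coefficient $g_2(t)$, the leading coefficient of $D^2N_2$, because $D^2N_1$ and $R_2$ have order at most $3$. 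This is nonzero in general.
\end{itemize}
So with the paper's definitions the identity is false for $n\ge2$. With the Section~6 indexing $0\le m\le n$ that you use, the two sides do not even have the same number of rows, and the check already fails at $n=1$.

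The paper's proof hides the same problem. The rows $D^{m+1}N_n$ produced by Lemma~6.1 are shifted by one derivative relative to the rows of $T_{n-1}$, so the induction hypothesis cannot be applied to them. Example~2 silently uses the correct blocks: its $\Delta_2^{(1)}$ is built from $D^2R_2$ and $DR_2$ and is not $\Delta_1$.

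What your argument does establish is the corrected statement $T_n=\widetilde T_n^{(0)}+\sum_{k=1}^{n}\Delta_k^{[n]}$. Here $\widetilde T_n^{(0)}$ has rows $\gamma_f(D^{m+n}N_1)$ and $\Delta_k^{[n]}$ has rows $\gamma_f(D^{m+n-k}R_{k+1})$. You should state and prove it in that form. Note also that only the top-level block $\Delta_n^{[n]}$ coincides with the paper's $\Delta_n$, and only under the indexing $0\le m\le n-1$.
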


\begin{proof}

The proof proceeds by induction.

For $n=1$,

\[
N_2
=
DN_1
+
R_2,
\]
so that

\[
T_1
=
T_1^{(0)}
+
\Delta_1.
\]

Assume the decomposition holds for $T_{n-1}$.

Applying Lemma~6.1 to every transport row gives

\[
D^mN_{n+1}
=
D^{m+1}N_n
+
D^mR_{n+1},
\]
which replaces each transport row by a previously existing row together
with a correction. Collecting all correction rows produces the perturbation
matrix $\Delta_n$, yielding

\[
T_n
=
T_n^{(0)}
+
\sum_{k=1}^{n}\Delta_k.
\]

Adding the unchanged boundary matrix $C_n$ gives

\[
M_n
=
M_n^{(0)}
+
\sum_{k=1}^{n}\Delta_k.
\]

\end{proof}


\begin{remark}

The recursive decomposition completely separates the universal part of the
compatibility matrix from the information introduced at each transport
level.

The reference matrix

\[
M_n^{(0)}
\]
depends only on the first transported operator and on the boundary
hierarchy.

Each perturbation matrix

\[
\Delta_k
\]
contains exclusively the new transport information generated at level
$k+1$.

This decomposition will allow the determinant of the compatibility matrix
to be studied as a sequence of successive perturbations.

\end{remark}

\section{Low-rank perturbations}\label{Sec_7}

The recursive decomposition obtained in the previous section expresses the
compatibility matrix as

\[
M_n
=
M_n^{(0)}
+
\sum_{k=1}^{n}\Delta_k.
\]

The usefulness of this representation depends on the algebraic structure
of the perturbation matrices. The purpose of this section is to show that
each $\Delta_k$ is a low-rank perturbation of the reference matrix.


\subsection{Rank of the perturbation matrices}

Observe that every perturbation matrix contains only transport rows, while
the boundary rows vanish identically.

Indeed,

\[
\Delta_k
=
\begin{pmatrix}
\gamma_f(D^{k-1}R_{k+1})\\
0\\
\gamma_f(D^{k-2}R_{k+1})\\
0\\
\vdots\\
\gamma_f(R_{k+1})\\
0
\end{pmatrix},
\]

where

\[
R_{k+1}
=
N_{k+1}-DN_k.
\]

Consequently, exactly one half of its rows are identically zero.


\begin{proposition}

For every $k\ge1$,

\[
\operatorname{rank}(\Delta_k)\le k.
\]

\end{proposition}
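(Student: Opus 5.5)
The bound will follow from counting nonzero rows. The rank of a matrix is at most the number of its rows that are not identically zero, so it suffices to show that $\Delta_k$ has at most $k$ such rows.

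First I will fix the row structure of $\Delta_k$ from its definition. The matrix alternates transport rows $\gamma_f(D^{m}R_{k+1})$, for $m=k-1,k-2,\ldots,0$, with zero rows occupying the boundary positions. This gives exactly $k$ transport rows and $k$ zero rows, so $\Delta_k$ is a $2k\times 2k$ array, consistent with the size of $M_k$.

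When $\Delta_k$ is added to the larger matrix $M_n$ in the recursive decomposition theorem, it has to be read as embedded in a $2n\times 2n$ array. I will make this convention explicit: the embedding pads with zero rows, and places entries in the appropriate columns indexed by the boundary derivatives. Padding with zero rows and columns cannot increase rank, so the count is unaffected.

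Next, I will write $\Delta_k$ as a sum of outer products,
\[
\Delta_k=\sum_{m=0}^{k-1} e_{i_m}\,\gamma_f(D^{m}R_{k+1})^{\top},
\]
where $e_{i_m}$ is the standard basis vector for the transport position of the row $\gamma_f(D^mR_{k+1})$. Each summand has rank at most one, and subadditivity of rank then gives $\operatorname{rank}(\Delta_k)\le k$. The same expression is the factorization $\Delta_k=U_kV_k^{\top}$, with $U_k$ of size $2n\times k$ with columns $e_{i_m}$ and $V_k$ with columns the transposed transport rows, which is the form needed for the Matrix Determinant Lemma in the following subsection.

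The only real obstacle is bookkeeping. One has to check that the row indexing and the column embedding of $\Delta_k$ inside $M_n$ are consistent, so that the boundary positions of $\Delta_k$ really coincide with boundary rows of $M_n$. Once that is settled, the rank bound is immediate.
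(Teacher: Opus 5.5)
Your argument is correct and is essentially the paper's: $\Delta_k$ has $2k$ rows, the $k$ rows in boundary positions vanish identically, so its row space is spanned by at most $k$ rows. Your outer-product sum with subadditivity of rank restates this, and the rank bound for $\Delta_k$ as defined is fully established by it. One caution: the embedding aside is unnecessary, and it misdescribes how the level-$k$ correction enters $M_n$. In the decomposition, $R_{k+1}$ contributes a term $\gamma_f(D^{j}R_{k+1})$, with $j$ depending on the row, to \emph{every} transport row of $M_n$; it is not a zero-padded copy of $\Delta_k$. For example, $\Delta_2^{(1)}$ in Example~2 has rank $2$, while $\Delta_1$ has rank at most $1$. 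So do not present your $2n\times k$ factorization as the one that feeds the Matrix Determinant Lemma.
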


\begin{proof}

The matrix $\Delta_k$ has $2k$ rows.

Among them, exactly $k$ rows vanish identically since the perturbation
affects only the transport equations.

Therefore the row space of $\Delta_k$ is generated by at most $k$
nonzero rows, implying

\[
\operatorname{rank}(\Delta_k)
\le
k.
\]

\end{proof}


\begin{remark}

The estimate above is purely structural and is independent of the
particular coefficients appearing in the transported operators.

In practice, the rank is frequently much smaller because the nonzero
transport rows often satisfy additional linear relations.

\end{remark}


\subsection{The Matrix Determinant Lemma}

Since every perturbation matrix has comparatively small rank, the
determinant of the compatibility matrix may be computed through successive
applications of the Matrix Determinant Lemma.

Recall the classical identity.

\begin{theorem}[Matrix Determinant Lemma]

Let $A$ be an invertible matrix and let

\[
U,V
\]
be matrices of compatible dimensions.

Then
\[
\det(A+UV^T)
=
\det(A)
\det\!\left(
I+V^TA^{-1}U
\right).
\]
\end{theorem}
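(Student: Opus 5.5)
The plan is to reduce the statement to the invertible case $U=0$ via a block-matrix factorization, after first isolating the standard special case $A=I$. Throughout, let $A$ be $m\times m$ and $U,V$ be $m\times r$, so that $UV^T$ is $m\times m$ and $V^TA^{-1}U$ is $r\times r$. The identity matrices $I$ on the two sides of the formula therefore have sizes $m$ and $r$ respectively.

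The first step is the special case $A=I_m$, namely
\[
\det(I_m+UV^T)=\det(I_r+V^TU).
\]
For this I would consider the $(m+r)\times(m+r)$ block matrix
\[
\begin{pmatrix} I_m & -U\\ V^T & I_r\end{pmatrix}
\]
and factor it in two ways: once by block elimination using the upper-left block, and once using the lower-right block. Multiplying on the left or right by unit block-triangular matrices, whose determinant is $1$, gives
\[
\begin{pmatrix} I_m & 0\\ -V^T & I_r\end{pmatrix}
\begin{pmatrix} I_m & -U\\ V^T & I_r\end{pmatrix}
=\begin{pmatrix} I_m & -U\\ 0 & I_r+V^TU\end{pmatrix},
\]
and
\[
\begin{pmatrix} I_m & -U\\ V^T & I_r\end{pmatrix}
\begin{pmatrix} I_m & U\\ 0 & I_r\end{pmatrix}
=\begin{pmatrix} I_m & 0\\ V^T & I_r+V^TU\end{pmatrix}.
\]
This pair alone does not yet connect to $I_m+UV^T$, so I would instead use the Schur complements with respect to each diagonal block. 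Taking the Schur complement of $I_r$ gives the determinant $\det(I_r)\det(I_m+UV^T)$. Taking the Schur complement of $I_m$ gives $\det(I_m)\det(I_r+V^TU)$. Equating the two expressions for the determinant of the same block matrix yields the special case. Each Schur-complement formula is just a block-triangular factorization together with multiplicativity of the determinant and the fact that a block-triangular matrix has determinant equal to the product of the determinants of its diagonal blocks.

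The second step reduces the general statement to the first. Since $A$ is invertible, write
\[
A+UV^T=A\,(I_m+A^{-1}UV^T),
\]
so that
\[
\det(A+UV^T)=\det(A)\det\bigl(I_m+(A^{-1}U)V^T\bigr).
\]
Applying the special case with $U$ replaced by $A^{-1}U$ gives $\det(I_r+V^TA^{-1}U)$, which is exactly the stated formula.

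The only point needing care is the bookkeeping of block sizes and the Schur-complement determinant formula. That formula is classical, so I expect no genuine obstacle. The main thing to state clearly is that the two identities live in different dimensions, and that no invertibility of $I+V^TA^{-1}U$ is needed, since only determinants of block-triangular matrices are used.
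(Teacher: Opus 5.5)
Your argument is correct. The paper gives no proof of this lemma; it simply recalls it as a classical identity and then applies it. So there is no argument to compare against: you have supplied the standard block-matrix proof that the paper takes for granted. That proof is Sylvester's identity $\det(I_m+UV^T)=\det(I_r+V^TU)$, followed by factoring out $A$.

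There are two presentational points to fix.

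First, your opening sentence says you reduce to ``the invertible case $U=0$''. What you actually do is reduce to $A=I_m$.

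Second, the two factorizations you display both eliminate using the $I_m$ block, so both compute $\det(I_r+V^TU)$. You noticed this yourself. The half of the argument that produces $I_m+UV^T$ is then only asserted in words (``the Schur complement of $I_r$''). Write it out explicitly, for instance
\[
\begin{pmatrix} I_m & U\\ 0 & I_r\end{pmatrix}
\begin{pmatrix} I_m & -U\\ V^T & I_r\end{pmatrix}
=
\begin{pmatrix} I_m+UV^T & 0\\ V^T & I_r\end{pmatrix}.
\]
Equating the two determinant evaluations of the same block matrix then closes the special case directly. You no longer need to invoke the general Schur-complement formula.

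It is also worth stating what your route buys over a bare citation. The proof uses only unit block-triangular factors and multiplicativity of the determinant. It therefore holds verbatim over any commutative ring in which $A$ is invertible, not just over a field. That is the setting in which the paper uses the lemma: the entries of $M_n^{(0)}$ and $\Delta_k$ are functions of $t$, such as $g_k(t)$ and $f'(t)$.
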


Whenever
\[
\Delta_k
=
U_kV_k^T
\]
is a rank-$r_k$ factorization of the perturbation,
\[
r_k
=
\operatorname{rank}(\Delta_k),
\]
one obtains
\[
\det(M_n)
=
\det(M_n^{(0)})
\prod_{k=1}^{n}
\det
\!\left(
I
+
V_k^TM_{k-1}^{-1}U_k
\right),
\]
provided the intermediate matrices are invertible.


\subsection{Example: the matrix $M_1$}

For the first compatibility matrix,

\[
M_1
=
M_1^{(0)}
+
\Delta_1,
\]
where

\[
M_1^{(0)}
=
\begin{pmatrix}
g_1(t)&0\\
1&2f'
\end{pmatrix},
\]
and

\[
\Delta_1
=
\begin{pmatrix}
g_2(t)-g_1(t)&
g_1(t)+g_2'(t)f(t)\\
0&0
\end{pmatrix}.
\]

Since $\Delta_1$ has rank one, it may be written as

\[
\Delta_1
=
uv^T,
\]
for suitable vectors $u$ and $v$.

The Matrix Determinant Lemma therefore gives

\[
\det(M_1)
=
\det(M_1^{(0)})
\left(
1+
v^T(M_1^{(0)})^{-1}u
\right).
\]

Because

\[
\det(M_1^{(0)})
=
2f'g_1(t),
\]
the determinant naturally separates into a universal factor depending only
on the first transported operator and a correction factor generated by the
transport increment.


\subsection{Iterated perturbations}

The recursive decomposition obtained in Section~6 suggests viewing the
compatibility matrices as successive perturbations,

\[
M_n^{(0)}
\longrightarrow
M_n^{(0)}+\Delta_1
\longrightarrow
M_n^{(0)}+\Delta_1+\Delta_2
\longrightarrow
\cdots
\longrightarrow
M_n.
\]

Thus, rather than viewing the compatibility matrix as a single large
object, it may be regarded as an initial matrix together with a sequence
of increasingly higher-order transport corrections.

Whether this recursive viewpoint leads to an efficient computational
procedure remains an open question. Nevertheless, it provides a natural
hierarchical representation of the compatibility matrices and clarifies
the role played by each transported operator in the construction.

\section{Examples}\label{Sec_8}

\subsection{Example 1: The first compatibility matrix}

We begin with the simplest nontrivial example. Although the determinant of
the first compatibility matrix can be computed directly, this example
illustrates the recursive decomposition introduced in the previous
sections.


Consider the first transported operator

\[
N_1
=
g_1(t)D
+
(c_{0,0}+c_{0,1}x),
\]
where
\[
g_1(t)=c_{1,0}+c_{0,1}t.
\]

The second transported operator is

\[
N_2
=
g_2(t)D^2
+
\left(
g_1(t)
+
g_2'(t)x
\right)D
+
c_{0,0}+c_{0,1}x+c_{0,2}(x^2+t),
\]
with
\[
g_2(t)
=
c_{2,0}+c_{1,1}t+c_{0,2}t^2.
\]

Differentiating $N_1$ with respect to the spatial variable gives
\[
DN_1
=
g_1(t)D^2
+
c_{0,1}.
\]


Restricting the operators to the moving boundary yields

\[
\gamma_f(N_2)
=
g_2D^2
+
\left(
g_1+g_2'f
\right)D,
\]
and
\[
\gamma_f(DN_1)
=
g_1D^2.
\]
The boundary operator is
\[
P_1
=
D^2+2f'D.
\]
Therefore,
\[
M_1
=
\begin{pmatrix}
\gamma_f(N_2)
\\
P_1
\end{pmatrix}.
\]


Using the recursive decomposition,

\[
N_2
=
DN_1
+
R_2,
\]
where
\[
R_2
=
N_2-DN_1,
\]
the compatibility matrix splits as
\[
M_1
=
M_1^{(0)}
+
\Delta_1,
\]
with
\[
M_1^{(0)}
=
\begin{pmatrix}
g_1&0
\\
1&2f'
\end{pmatrix},
\]
and
\[
\Delta_1
=
\begin{pmatrix}
g_2-g_1&
g_1+g_2'f
\\
0&0
\end{pmatrix}.
\]
Notice that $\Delta_1$ affects only the transport row, while the boundary
row remains unchanged. This is a consequence of the fact that the boundary
hierarchy is completely independent of the transported operators.

Moreover,
\[
\det(M_1^{(0)})
=
2f'(t)\,g_1(t).
\]
Observe that the determinant of the reference matrix depends only on the
first transport coefficient $g_1(t)$. This phenomenon persists in the
higher-order examples considered below and motivates the study of the
reference determinants independently of the transport perturbations.

Since $\Delta_1$ has rank one, it admits a factorization
\[
\Delta_1
=
uv^T,
\]
for suitable vectors $u$ and $v$.

Consequently, the Matrix Determinant Lemma gives

\[
\det(M_1)
=
\det(M_1^{(0)})
\left(
1+
v^T(M_1^{(0)})^{-1}u
\right).
\]

Substituting the explicit expressions for $u$ and $v$ into the determinant
formula yields exactly the expression obtained by direct computation:
\[
\det(M_1)=2f'(t)g_2(t)-\left(g_1(t)+g_2'(t)f(t)\right).
\]


This example illustrates the main philosophy developed in this paper. The
compatibility matrix is first reduced to a universal reference matrix,
whose determinant depends only on the first transported operator, while
all higher-order information is incorporated through successive transport
perturbations.
\subsection{Example 2: The second compatibility matrix}

We now consider the second compatibility matrix. In contrast with the
previous example, the recursive transport decomposition now introduces two
successive perturbations.

The compatibility matrix is

\[
M_2=
\begin{pmatrix}
\gamma_f(DN_3)\\
P_2\\
\gamma_f(N_3)\\
P_1
\end{pmatrix}.
\]
The recursive transport formula gives
\[
N_3
=
DN_2
+
R_3,
\]
where
\[
R_3
=
N_3-DN_2.
\]
Applying the same recursion once more,
\[
N_2
=
DN_1
+
R_2,
\]
with
\[
R_2
=
N_2-DN_1.
\]
Substituting the second identity into the first yields
\[
N_3
=
D^2N_1
+
DR_2
+
R_3,
\]
and therefore
\[
DN_3
=
D^3N_1
+
D^2R_2
+
DR_3.
\]

Consequently,
\[
\gamma_f(DN_3)
=
\gamma_f(D^3N_1)
+
\gamma_f(D^2R_2)
+
\gamma_f(DR_3),
\]
and
\[
\gamma_f(N_3)
=
\gamma_f(D^2N_1)
+
\gamma_f(DR_2)
+
\gamma_f(R_3).
\]
Hence
\[
M_2
=
M_2^{(0)}
+
\Delta_2^{(1)}
+
\Delta_2^{(2)},
\]
where the reference matrix is
\[
M_2^{(0)}
=
\begin{pmatrix}
g_1 & 0 & 0 & 0\\
1 & 4f' & 4(f')^2 & 4f''\\
0 & g_1 & 0 & 0\\
0 & 0 & 1 & 2f'
\end{pmatrix},
\]
the first transport perturbation is
\[
\Delta_2^{(1)}
=
\begin{pmatrix}
g_2-g_1
&
g_1+g_2'f
&
2g_2'
&
0
\\
0&0&0&0
\\
0
&
g_2-g_1
&
g_1+g_2'f
&
g_2'
\\
0&0&0&0
\end{pmatrix},
\]
and the second transport perturbation is
\[
\Delta_2^{(2)}
=
\begin{pmatrix}
g_3-g_2
&
g_2+g_3'f
&
2g_3'
&
g_3''
\\
0&0&0&0
\\
0
&
g_3-g_2
&
g_2+g_3'f
&
g_3'
\\
0&0&0&0
\end{pmatrix}.
\]

Observe that each perturbation modifies only the transport rows, while the
boundary hierarchy remains completely unchanged.

The determinant of the reference matrix is

\[
\det(M_2^{(0)})
=
8g_1(t)^2
\left(
2(f'(t))^3-f''(t)
\right).
\]

Since both perturbation matrices satisfy
\[
\operatorname{rank}\!\left(\Delta_2^{(1)}\right)
=
\operatorname{rank}\!\left(\Delta_2^{(2)}\right)
=
2,
\]
each admits a factorization
\[
\Delta_2^{(i)}
=
U_iV_i^T,
\qquad
U_i,V_i\in\mathbb{R}^{4\times2}.
\]

The Matrix Determinant Lemma therefore yields
\[
\det(M_2^{(1)})
=
\det(M_2^{(0)})
\,
\det
\!\left(
I_2
+
V_1^T(M_2^{(0)})^{-1}U_1
\right),
\]
and
\[
\det(M_2)
=
\det(M_2^{(1)})
\,
\det
\!\left(
I_2
+
V_2^T(M_2^{(1)})^{-1}U_2
\right).
\]

Hence,
\[
\det(M_2)
=
\det(M_2^{(0)})
\,
\det
\!\left(
I_2
+
V_1^T(M_2^{(0)})^{-1}U_1
\right)
\,
\det
\!\left(
I_2
+
V_2^T(M_2^{(1)})^{-1}U_2
\right).
\]

The decomposition therefore replaces the computation of the determinant of
a single large compatibility matrix by a sequence of determinant updates
associated with low-rank perturbations. Whether this representation
ultimately leads to a computational advantage remains an open question.
Nevertheless, it provides a recursive description of the determinant that
reflects the recursive construction of the transported operators
themselves.

\section{Discussion and future directions}\label{Sec_9}

The main objective of this work has been to develop a recursive framework
for constructing the compatibility matrices associated with moving-boundary
problems. Rather than viewing these matrices as isolated algebraic objects,
the proposed approach exploits the recursive structure of the transported
operators to build them systematically.

The resulting decomposition

\[
M_n
=
M_n^{(0)}
+
\sum_{k=1}^{n}\Delta_k
\]
separates the compatibility matrix into two distinct components. The
reference matrix $M_n^{(0)}$ contains only the information associated with
the first transported operator together with the complete boundary
hierarchy, while each perturbation matrix $\Delta_k$ contains exclusively
the new transport information introduced at the $(k+1)$-st recursive
level. Consequently, every recursive update affects only the transport
rows, leaving the boundary hierarchy unchanged.

Although the present work does not provide a closed expression for
\[
\det(M_n),
\]
the recursive decomposition offers a different perspective on the
problem. Instead of computing the determinant of a large compatibility
matrix directly, one may regard it as the determinant of a reference
matrix together with a sequence of structured perturbations. This
observation naturally suggests the use of low-rank perturbation techniques,
such as the Matrix Determinant Lemma, to investigate the determinant
recursively.

The examples presented in this paper indicate that the reference matrices
possess additional algebraic structure. In particular, the first examples
suggest the existence of differential polynomials $Q_n(f)$ satisfying
\[
\det(M_n^{(0)})
=
c_n\,g_1(t)^n\,Q_n(f),
\]
for suitable constants $c_n$. At present this remains an empirical
observation, but it suggests that the reference determinants may admit a
much simpler description than the compatibility matrices themselves.

Likewise, the perturbation matrices appear to satisfy considerably
stronger structural properties than those established here. While the
present analysis proves only general rank estimates, numerical
experiments suggest that additional linear dependencies may exist among
their transport rows. A better understanding of these perturbations could
lead to substantially simpler recursive determinant formulas.

Several natural directions for future investigation emerge from this work.

\begin{itemize}

\item
Obtain a closed recurrence for the differential polynomials
$Q_n(f)$ appearing in the reference determinants.

\item
Determine canonical low-rank factorizations for the perturbation matrices
$\Delta_k$.

\item
Develop recursive formulas for $\det(M_n)$ based on successive
applications of the Matrix Determinant Lemma.

\item
Extend the recursive transport construction to more general parabolic
operators and to multidimensional moving-boundary problems.

\item
Investigate possible connections between the recursive compatibility
matrices developed here and other recursive constructions appearing in
integrable systems, inverse problems, and free-boundary theory.

\end{itemize}

More generally, the recursive point of view developed in this paper
suggests that compatibility matrices possess a hierarchical algebraic
structure that has not been fully exploited. Whether this perspective
ultimately leads to explicit formulas for the moving boundary remains an
open question. Nevertheless, we believe that the recursive decomposition
introduced here provides a useful framework for studying these problems and
may serve as the starting point for a broader algebraic theory of
compatibility matrices associated with moving boundaries.

\nocite{*}

\bibliographystyle{plain}
\bibliography{references}

\end{document}